%
%
%


\documentclass{amsproc}




\usepackage{color}
\usepackage{amsmath}
\usepackage{amsthm}
\usepackage{bm}
\usepackage{multirow}
\usepackage{graphicx}

\DeclareMathOperator{\trace}{tr}

\newcommand*{\Id}{\operatorname{Id}}


\newtheorem{theorem}{Theorem}[section]

\newtheorem{proposition}[theorem]{Proposition}

\theoremstyle{definition}

\theoremstyle{remark}

\numberwithin{equation}{section}

\begin{document}

\title{Canards in modified equations for Euler discretizations}



\author{Maximilian Engel}
\address{Department of Mathematics and Computer Science, Freie Universit\"at Berlin, Germany}
\email{maximilian.engel@fu-berlin.de}
\thanks{ME was supported by Germany’s Excellence Strategy – The Berlin Mathematics Research Center MATH+ (EXC-2046/1, project ID:
390685689), via projects AA1-8 and AA1-18. Furthermore, ME thanks the DFG SPP 2298 and SFB 1114 (project A08) for support.}

\author{Georg A. Gottwald}
\address{School of Mathematics and Statistics, The University of Sydney, 2006 NSW, Australia}
\email{georg.gottwald@sydney.edu.au}
\thanks{GAG was partially supported by the Australian Research Council grant DP180101385. The authors thank H.~Jard\'on-Kojakhmetov for helpful comments.}

\subjclass[2020]{34E15, 34E17, 65L11, 65L70}

\date{today}

\begin{abstract}
Canards are a well-studied phenomenon in fast-slow ordinary differential equations implying the delayed loss of stability after the slow passage through a singularity. Recent studies have shown that the corresponding maps stemming from explicit Runge-Kutta discretizations, in particular the forward Euler scheme, exhibit significant distinctions to the continuous-time behavior: for folds, the delay in loss of stability is typically shortened whereas, for transcritical singularities, it is arbitrarily prolonged. We employ the method of modified equations, which correspond with the fixed discretization schemes up to higher order, to understand and quantify these effects directly from a fast-slow ODE, yielding consistent results with the discrete-time behavior and opening a new perspective on the wide range of (de-)stabilization phenomena along canards.
\end{abstract}

\maketitle
\section{Introduction}
\label{sec:Intro}
Dynamical systems on multiple time scales are extensively studied and exhibit rich behavior, stemming from certain types of ODEs or PDEs that are often models of real-world systems in biology, physics or the social sciences. 
Given the relevance for applications there is high interest in the features of numerical discretizations of such models. The numerical discretization of multiscale systems is generally challenging.
 For stability reasons, the time step typically needs to be adjusted to capture the fast dynamics. This implies that in order to capture the more relevant slow dynamics one faces high computational costs. 
 Furthermore, critical changes, for example due to bifurcations, may occur, causing complicated behavior that is difficult to capture via numerical schemes. 
 For these reasons but also as a starting point for understanding intricate fast-slow phenomena in discrete time, there has been growing interest in recent years in studying the multiscale behavior of discretization schemes in the context of singularities requiring specific geometric methods (see e.g.~\cite{EngelJardonSIADS, EngelKuehn19, EngeletalJNLS, JelbartKuehn22, NippStoffer2013}).

The standard problem is to consider a system of singularly perturbed ordinary differential equations (ODEs) on the \textit{slow} time scale
\begin{equation}\label{eq:gen_slowequ}
    \begin{split}
    \varepsilon \dot x &= f(x,y,\epsilon)\,, \\
    \dot y &= g(x,y,\epsilon)\,,  \quad \ x \in \mathbb{R}^m, 
\quad y \in \mathbb R^n, \quad 0 < \epsilon \ll 1\,,
    \end{split}
\end{equation}
with \textit{critical manifold} 
\begin{equation}
    S_0= \{(x,y) \in \mathbb{R}^{m+n} \,:\, f(x,y,0) = 0 \}\,,
\end{equation}
where $\epsilon\ll 1$ quantifies the degree of the time scale separation. 
We call the set $S_0$ \textit{normally hyperbolic} if, for all $p\in S_0$,
the Jacobian $\textnormal{D}_xf(p)\in\mathbb{R}^{m\times m}$ has no
eigenvalue on the imaginary axis. 
The by now classical \textit{Fenichel 
Theory} \cite{Fenichel4,Jones,KuehnBook} says that, if $S_0$ is normally hyperbolic and compact, then there is a locally invariant \emph{slow manifold} $S_{\varepsilon}$,
behaving like a regular perturbation of $S_0$, 
for all $\varepsilon$ 
sufficiently small. 
On the other hand, \emph{loss of normal hyperbolicity}, which occurs whenever $\textnormal{D}_xf(p)$ has at least one eigenvalue on the imaginary axis, is known to be responsible for many complicated dynamic effects, such as \emph{canards}. 

Here, we focus on planar fast-slow systems with a \emph{canard point} at the origin, past whom trajectories connect an attracting branch of the slow manifold with a repelling one, also described as \emph{maximal canard}~\cite{BenoitCallotDienerDiener,DumortierRoussarie96}. 
For continuous-time fast-slow systems of the form~\eqref{eq:gen_slowequ}, such canard solutions characterize the delay in the onset of instabilities when trajectories slowly cross a singularity and continue for some time near the unstable part of the invariant manifold \cite{DeMaes2008, de2016entry, fruchard2009survey, hayes2016geometric}.
Two important types of such canards occur in fold \cite{KrupaSzmolyan01a} and in transcritical singularities \cite{KrupaSzmolyan01b}. Their respective Euler discretizations constitute simple fast-slow maps with interesting phenomena, and exhibit different effects to those of the continuous time ODEs they were designed to model. While the fold problem contains conserved quantities up to first order which are not captured by simple explicit forward schemes and seems to be inaccessible via the Euler method \cite{EngelJardonSIADS, EngeletalJNLS}, the transcritical Euler map still exhibits canards but with the intriguing effect of discretization-induced stabilization; in other words, one can observe the extended loss of stability compared to the corresponding canonical ODE \cite{EngelJardonSIADS, EngelKuehn19}.

The crucial idea of this article is to understand these effects better via the tool of \emph{modified equations}. Rather than studying the discrete dynamical system provided by the Euler discretization as done in \cite{EngelJardonSIADS, EngeletalJNLS}, the concept of backward error analysis allows for the description of the behaviour of the discrete system by a continuous dynamical system, the modified equation, which is asymptotically close to the original dynamical system in the fixed time step $h$ of the Euler discretization. This has the advantage of having at our disposal the rich gamut of analytical tools available for continuous-time systems. Backward error analysis has been widely used in numerical analysis to study the stability of numerical schemes, see for example the excellent textbooks \cite{HairerLubichWanner,LeimkuhlerReich}.

Our two main findings are that, by virtue of the modified equations, we can (a) find parameter regimes to stabilize canard phenomena for the Euler map, formulating the modified equation for the Euler discretization within the corresponding normal form framework of folded canard points (see Proposition~\ref{prop:fold}), and (b) capture the stabilization effect for the transcritical problem in terms of way-in/way-out relations for the respective modified equation in a straightforward manner (see Proposition~\ref{prop:psi_transcritical}).
We present our observations according to the following structure: Section~\ref{sec:BE} introduces the key ideas of backward error analysis and derives the general form of a second order modified equation for Euler schemes.
In Section~\ref{sec:fold}, we discuss properties of the modified equation for the Euler discretization of the most simple canonical form of a fold singularity with canards, studying its stability and way-in/way-out map in Section~\ref{sec:fold_mod_Euler}. We then show that this modified equation, in fact, fits the general normal form of such folded canards upon addition of a parameter $\lambda$, yielding a prediction of maximal canards and Hopf bifurcation in terms of $\lambda$, depending on the time separation parameter $\varepsilon$; this allows for obtaining parameter regimes under which the Euler map approximates typical canard behavior more accurately.
Section~\ref{sec:transcritical} discusses the analogous analysis to Section~\ref{sec:fold} for the canonical form of transcritical canards, giving exactly the same prediction of their arbitrarily long stabilization as the discrete time problem. Hence, we establish a continuous-time example with full stabilization of canards along the repelling critical branch.
We conclude with a discussion and outlook in Section~\ref{sec:discuss}.


\section{Backward error analysis and the modified equations}
\label{sec:BE}
Backward error analysis \cite{HairerLubichWanner,LeimkuhlerReich} allows for the study of finite time step effects of discrete numerical schemes by studying continuous time dynamics of so called modified equations. The main idea is that a numerical scheme approximates a modified equation to a higher accuracy in the time step $h$ than the actual ODE one set out to solve. Consider the ODE
\begin{align}
\dot z = f_0(z).
\label{eq:ODE}
\end{align}
Its Euler discretization is
\begin{align}
z_{n+1} = z_n  + h f_0(z_n),
\label{eq:Euler}
\end{align}
where $h$ denotes the discrete time step. The Euler discretization is an $\mathcal{O}(h)$ discretization of the ODE. We can construct a modified ODE
\begin{align}
\dot{\tilde z} = f_h(\tilde z)
\end{align} 
for which the Euler discretization is, for example, an $\mathcal{O}(h^2)$ approximation. Hence, solutions of the numerical scheme better represent solutions of this modified equation than those of the original ODE. To construct the modified equation we expand the modified vectorfield $f_h(\tilde z)$ in powers of the time step $h$ as 
\begin{align}
f_h(\tilde z) = f_0(\tilde z) + h f_1(\tilde z) + h^2 f_2(\tilde z) +  \mathcal{O}(h^3). 
\end{align}
Taylor expanding the solution $\tilde z$ around  $\tilde z_n$ yields
\begin{align}
\tilde z_{n+1} 
&= \tilde z_n + h f_h(\tilde z_n) + \frac{h^2}{2} \textnormal{D} f_h(\tilde z_n)f_h(\tilde z_n) + \mathcal{O}(h^3) \nonumber  \\
&= \tilde z_n + h f_0(\tilde z_n) + h^2 \left(\frac{1}{2} \textnormal{D}f_0 f_0(\tilde z_n) + f_1(\tilde z_n) \right) +  \mathcal{O}(h^3),
\label{eq:Taylor}
\end{align}
where $\textnormal{D}f$ denotes the Jacobian of $f$.
Hence, taking $f_1(z) = -\tfrac12 \textnormal{D}f_0 f_0(z)$, we obtain a numerical discretization which is now second order, and the Euler scheme (\ref{eq:Euler}), which solves the original ODE (\ref{eq:ODE}) only to first order in $h$, solves the modified equation
\begin{align}
\dot z = f_0(z) - \frac{h}{2}\textnormal{D}f_0 f_0(z) 
\label{eq:ODEmod}
\end{align}
to second order in $h$. Note that the time step $h$ here is finite but fixed.
 

\section{Canard in a fold}
\label{sec:fold}
We consider the simplest form of a fold singularity, admitting a canard connection in a slow-fast system
\begin{align}
\varepsilon \dot x &= -y + x^2
\nonumber
\\
\dot y &= x,
\label{eq:fold}
\end{align}
where $\varepsilon \ll 1$ is a small parameter quantifying the degree of scale separation between the slow variable $y$ and the fast variable $x$. 
The maximal canard solution $(x^*(t), y^*(t)) = (\frac{t}{2}, \frac{t^2}{4} - \frac{\varepsilon}{2})$ of~\eqref{eq:fold} lives on the invariant slow manifold $S_{\varepsilon} = \{y=x^2 - \varepsilon/2 \}$, connecting the stable, attracting branch $S_{\varepsilon}^{\textnormal{a}} = \{(x,y) \in S_{\varepsilon}  \,:\, x <0 \}$ and the unstable, repelling branch $S_{\varepsilon}^{\textnormal{r}} = \{(x,y) \in S_{\varepsilon}  \,:\, x >0 \}$. 
The maximal canard serves as a benchmark for the remarkable phenomenon of trajectories staying close to repelling invariant sets for long times; see e.g.~\cite{EngelJardonSIADS, EngeletalJNLS, KrupaSzmolyan01a, KrupaSzmolyanJDE, Wechselberger2012}.

\subsection{Modified equations for Euler}
\label{sec:fold_mod_Euler}

The Euler discretization of (\ref{eq:fold}) is
\begin{align}
x_{n+1} &= x_n  - \frac{h}{\varepsilon}\left( y_n - x_n^2\right)\nonumber \\
y_{n+1} &= y_n + h x_n,
\label{eq:foldEuler}
\end{align}
where $h$ denotes the discrete time step. To resolve the fast dynamics we need to require $h< \varepsilon$.
It is well-known that the Euler discretization is a deficient method for folded canard phenomena, in particular due to the non-preservation of an integral of motion $H(x,y) = e^{\frac{-2y}{\varepsilon}} \left( y - x^2 + \frac{\varepsilon}{2}\right)$ for (\ref{eq:fold}) (see e.g.~\cite{EngeletalJNLS}), and thereby entailing earlier escape from the vicinity of $S_{\varepsilon}^{\textnormal{r}}$. We investigate the implications for the associated modified equation:

Setting $z=(x,y)$, the modified equation (\ref{eq:ODEmod}) for the Euler discretization (\ref{eq:foldEuler}) becomes
\begin{align}
\varepsilon \dot x &= -y + x^2 + \frac{h}{\varepsilon} x \left(y-x^2 + \frac{\varepsilon}{2}  \right)
\nonumber
\\
\dot y &= x + \frac{h}{2\varepsilon}\left( y-x^2 \right).
\label{eq:foldmod}
\end{align}
The dynamics evolves in the modified equation (\ref{eq:foldmod}) up to $\mathcal{O}(\varepsilon^2)$ on the same manifold $S_{\varepsilon} = \{y=x^2 - \varepsilon/2 \}$ on which the dynamics of the full fold equations (\ref{eq:fold}) evolves.   
Linearizing the modified equation (\ref{eq:foldmod}) around the approximate manifold $S_{\varepsilon}$ yields the eigenvalues
\begin{align}
\lambda_{1,2} &= \frac{1}{\varepsilon^2}
\left[ 
-h x^2 +\frac{h}{4}\varepsilon+\varepsilon x 
\pm 
\sqrt{
(hx^2-\frac{h}{4}\varepsilon - \varepsilon x)^2
-\varepsilon^2(\varepsilon-h x)
}
\right].
\label{eq:lambda}
\end{align}
The eigenvalues are shown for $\varepsilon=0.1$ and $h=10^{-4}$ as a function of $x$ in Figure~\ref{fig:fold_lambda}. Notably, there is a range of initial conditions $x\in[x_1,x_2]$ such that $\lambda$ forms a complex conjugate pair. The boundary of this set is given as solutions of 
\begin{align}
0 = (\frac{h}{\varepsilon}x^2-\frac{h}{4} - x)^2 - \varepsilon + h x.
\label{eq:lambda2}
\end{align}
A Taylor expansion around $h=0$ yields 
\begin{align}
x_{1,2} = \pm \sqrt{\varepsilon} + \frac{h}{4} + \mathcal{O}(h^2).
\label{eq:lambda3}
\end{align}
Note that the range is symmetric around the origin for the original fold equation with $h=0$ but is asymmetric for the modified equation with $h> 0$. We will explain this shift in the subsequent section below.

\medskip

To determine when the dynamics leaves the manifold $S_{\varepsilon}$ we compute the way-in/way-out map $\Psi(\tau)$ along the solution $x=x_0+t/2$ defined as
\begin{align}
\Psi(t) = \int_{t_0}^t \mathcal{R}[\lambda_1](s) ds,
\label{eq:wayinout}
\end{align}
where $\lambda_1$ is the eigenvalue with maximal modulus (cf.~ (\ref{eq:lambda})). Here $t_0$ denotes the time when the solution has approached the manifold $S_{\varepsilon}= \{y=x^2-\varepsilon/2 \}$ at the branch $S_{\varepsilon}^{\textnormal{a}}$. 
Without loss of generality, we set $t_0=0$. When $\Psi(t)=0$, the solution has experienced as much expansion on the unstable branch with $x>0$ as it has experienced contraction along $x<0$. Hence, if $\Psi(\tau)=0$ and $\Psi(t) > 0$ for $t \in (\tau, \tau +\delta)$ for some $\delta > 0$, the solution will exit from the branch $S_{\varepsilon}^{\textnormal{r}} \subset S_{\varepsilon}$. 


In Figure~\ref{fig:fold}, we show the results of numerical simulations. The solution of the original system (\ref{eq:fold}) still follows the manifold $S_{\varepsilon}$ when its Euler discretization already leaves the manifold. It is clearly seen that the modified equation describes the solution of the Euler discretization very well and the exit point is well described by the way-in/way-out map $\Psi(\tau)$ and its roots. 
To find the solutions of the ODEs we employ the Matlab routine ode45 with a pre-set absolute and relative tolerance of $10^{-12}$. We further increase the floating point precision of the Euler discretization to 50 digits.


\subsection{Canard extensions and Hopf bifurcation}
\label{sec:fold_extensions}
In the normal form of folded canards one can observe singular Hopf bifurcations \cite[Theorem 8.2.1]{KuehnBook} and extensions of maximal canards. To observe those, one has to add a parameter $\lambda$ which in the simplest form yields
\begin{align}
\varepsilon \dot x &= -y + x^2
\nonumber
\\
\dot y &= x - \lambda,
\label{eq:fold_lambda}
\end{align}
 Observe that for $\lambda=0$, this is the same as~\eqref{eq:fold}. The equilibrium for the normal form \eqref{eq:fold_lambda} is at $(x,y)=(\lambda, \lambda^2)$ with Jacobian
 $$ J(x=\lambda,y=\lambda^2)=\frac{1}{\epsilon} \begin{pmatrix}  2 \lambda & -1 \\ \epsilon & 0
 \end{pmatrix}.$$
Hence, there is a Hopf bifurcation at $\lambda =0$ with equilibrium $(0,0)$ which is, in fact, degenerate, and only becomes non-degenerate if additional terms are included in \eqref{eq:fold_lambda} to make the first Lyapunov coefficient non-zero.

The modified equation for an Euler discretization  of~\eqref{eq:fold_lambda} reads
\begin{align}
\varepsilon \dot x &= -y + x^2 + \frac{h}{\varepsilon} x \left(y-x^2 \right) + \frac{h}{2} (x-\lambda)
\nonumber
\\
\dot y &= x - \lambda + \frac{h}{2\varepsilon}\left( y-x^2 \right).
\label{eq:foldmod_lambda}
\end{align}
Recall that we require $h < \varepsilon$.
The Jacobian at any point $(x,y)$ is given by
 $$ J(x,y)=\frac{1}{\epsilon} \begin{pmatrix}  2x + \frac{h}{\epsilon}y - 3 \frac{h}{\epsilon} x^2 + \frac{h}{2} & -1 + \frac{h}{\epsilon}x \\ \epsilon - hx & \frac{ h}{2}
 \end{pmatrix}.$$
 Hence, at the equilibrium $x= y =\lambda=0$, we now have that $\trace[J(0,0)] = \frac{h}{\epsilon}$. This suggests that there may be a bifurcation for some $\lambda$-dependent equilibrium for $\lambda <0$.  
 Indeed, we can check this by using results from \cite{KrupaSzmolyanJDE}. 
 %
%
In more detail, we write equation~\eqref{eq:fold_lambda} in the fast time scale $\tau = t/\epsilon$:
\begin{align}
x' &= -y + x^2
\nonumber
\\
 y' &= \varepsilon(x - \lambda).
\label{eq:fold_lambda_fast}
\end{align}
The associated Euler discretization is 
\begin{align}
x_{n+1} &= x_n  - \tilde h \left( y_n - x_n^2\right)\nonumber \\
y_{n+1} &= y_n + \tilde h \varepsilon (x_n - \lambda),
\end{align}
with $\tilde h = h/\varepsilon$, and its corresponding modified equation reads 
\begin{align}
 x' &= -y + x^2 + \tilde h x \left(y-x^2 \right) + \frac{\tilde h \varepsilon}{2} (x-\lambda)
\nonumber
\\
y' &= \varepsilon(x - \lambda) + \frac{\varepsilon \tilde h}{2}\left( y-x^2 \right).
\label{eq:foldmod_lambda_fast}
\end{align}
Note that $\tilde p = (\lambda, \lambda^2)$, which is the origin for $\lambda=0$, remains an equilibrium for the modified equation. The modified equations~\eqref{eq:foldmod_lambda} and~\eqref{eq:foldmod_lambda_fast} coincide under a combination of the continuous time change $\tau = t/\varepsilon$ and the discrete time change $\tilde h = h/\varepsilon$.

We obtain the following observation:
\begin{proposition}
\label{prop:fold}
For sufficiently small $\varepsilon$ and $0< h < \varepsilon$,  system~\eqref{eq:foldmod_lambda_fast} exhibits a Hopf bifurcation at 
\begin{equation}
    \lambda_{\text{H}}(\sqrt{\epsilon}) = - \frac{h}{2} + \mathcal O \left( h \sqrt{\epsilon} \right),
    \label{eq:lambdaHopf}
\end{equation}
and the existence of a maximal canard at the same value of $\lambda$ up to first order, i.e.~at
\begin{equation}
     \lambda_{\text{C}}(\sqrt{\epsilon}) = - \frac{h}{2}  + \mathcal O \left( h \sqrt{\epsilon}\right).
    \label{eq:lambdaCanard}
\end{equation}
\end{proposition}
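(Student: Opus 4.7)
The plan is to view~\eqref{eq:foldmod_lambda_fast} as a perturbation of the folded-canard normal form of~\cite{KrupaSzmolyanJDE}, in which the additional $h$-terms act as $\tilde h$-scaled $\varepsilon$-corrections. First, $\tilde p := (\lambda,\lambda^2)$ remains an equilibrium of the modified system because each correction term carries a factor of either $y-x^2$ or $x-\lambda$, and both vanish at $\tilde p$; I would record this by direct substitution.

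For the Hopf bifurcation I would compute the Jacobian $J(\tilde p)$ already displayed before the statement and solve $\trace J(\tilde p)=0$. A routine calculation yields
\[
\trace J(\tilde p) = 2\lambda - 2\tilde h \lambda^2 + \tilde h \varepsilon,
\]
a quadratic in $\lambda$ whose root near the origin expands as $\lambda_H = -h/2 + h^3/(4\varepsilon) + \cdots$. Since the standing assumption $0<h<\varepsilon<1$ forces $h^3/\varepsilon \le h\varepsilon \le h\sqrt{\varepsilon}$, this proves~\eqref{eq:lambdaHopf}. Evaluating the determinant at $\lambda_H$ then gives $\varepsilon + \tfrac{3}{4}h^2 + \mathcal{O}(h^4/\varepsilon) > 0$, so the crossing eigenvalues are purely imaginary, and transversality follows from $\partial_\lambda \trace J(\tilde p)|_{\lambda_H} = 2 - 4\tilde h\lambda_H = 2 + \mathcal{O}(h^2/\varepsilon) \neq 0$.

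For the maximal canard I would appeal to the Krupa--Szmolyan theorem after rewriting~\eqref{eq:foldmod_lambda_fast} as
\[
x' = -y + x^2 + \varepsilon \tfrac{\tilde h}{2}(x-\lambda) + \tilde h x(y - x^2), \qquad y' = \varepsilon\bigl[x - \lambda + \tfrac{\tilde h}{2}(y - x^2)\bigr],
\]
which fits their folded-canard normal form with all $\varepsilon$-perturbation coefficients proportional to $\tilde h$. Consequently the canard locus factors as $\lambda_C(\sqrt{\varepsilon}) = h\, g_C(\sqrt{\varepsilon})$ with $g_C$ analytic, and evaluating their explicit formula at these coefficients gives $g_C(0) = -1/2$, matching the Hopf computation; the $\mathcal{O}(h\sqrt{\varepsilon})$ remainder in~\eqref{eq:lambdaCanard} then follows from $g_C(\sqrt{\varepsilon}) = -1/2 + \mathcal{O}(\sqrt{\varepsilon})$.

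The main obstacle is the final step: applying the Krupa--Szmolyan canard formula requires matching all of the modified-equation coefficients to the structural constants of their normal form and checking that the resulting combination produces precisely $-h/2$ at leading order. The coincidence of $\lambda_H$ and $\lambda_C$ at this order is striking, and reflects that the $h$-perturbation enters $x'$ and $y'$ symmetrically with prefactor $\tilde h/2$. An alternative that bypasses their full machinery is to extend the way-in/way-out integral~\eqref{eq:wayinout} to the $\lambda$-parametrised modified flow and locate its simple zero asymptotically; either route should produce the same leading value $\lambda = -h/2$.
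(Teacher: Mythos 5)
Your Hopf computation is a legitimate, more elementary route than the paper's: you verify the eigenvalue crossing directly from the Jacobian at $\tilde p=(\lambda,\lambda^2)$, and the steps check out ($\trace J(\tilde p)=2\lambda-2\tilde h\lambda^2+\tilde h\varepsilon$, the small root $\lambda_H=-h/2+h^3/(4\varepsilon)+\cdots$ lying within $\mathcal O(h\sqrt\varepsilon)$, positive determinant at $\lambda_H$, and transversality). The paper instead obtains the Hopf location from \cite[Theorem 3.1]{KrupaSzmolyanJDE}; your version is consistent with the paper's caveat that the bifurcation may be degenerate, since neither argument computes a Lyapunov coefficient.

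The canard half, however, contains a genuine gap: you defer exactly the step that constitutes the paper's proof. The paper's argument is to check that \eqref{eq:foldmod_lambda_fast} fits the canonical form of a non-degenerate canard point from \cite[Section 3.2]{KrupaSzmolyanJDE} with factors $h_1=h_2=1-\tilde h x$, $h_3=\tfrac{\tilde h}{2}(x-\lambda)$, $h_4=1-\tfrac{\tilde h}{2}x$, $h_5=1$, $h_6=\tfrac{\tilde h}{2}$, to read off the constants $a_1=a_5=\tfrac{\tilde h}{2}$, $a_2=a_3=-\tilde h$, $a_4=-\tfrac{\tilde h}{2}$ and, crucially, $A=0$, and then to apply \cite[Theorem 3.1]{KrupaSzmolyanJDE}, which yields both $\lambda_H$ and $\lambda_C$ equal to $-h/2+\mathcal O(h\sqrt{\varepsilon})$, the two locations differing at this order only through a term proportional to $A$. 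Your proposal asserts that evaluating the Krupa--Szmolyan formula at the modified-equation coefficients gives $g_C(0)=-1/2$, but you never perform the matching and you flag it yourself as ``the main obstacle''; hence neither the value of $\lambda_C$ nor its first-order coincidence with $\lambda_H$ is established. Note also that this coincidence is not a consequence of the symmetric $\tilde h/2$ prefactors in $x'$ and $y'$: in the Krupa--Szmolyan framework it is equivalent to the vanishing of the combination $A$ of the constants $a_2,\dots,a_5$, which involves the coefficients generated by the term $\tilde h x(y-x^2)$ and must be computed (here it happens to vanish). The suggested fallback via an extended way-in/way-out integral is likewise only a sketch. To complete the proof you need to carry out the normal-form identification and the computation of $a_1,\dots,a_5$ and $A$, which is precisely the content of the paper's proof.
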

\begin{proof}
We make direct use of the fact that equation~\eqref{eq:foldmod_lambda_fast} fits the canonical form of a non-degenerate canard point \cite[Section 3.2]{KrupaSzmolyanJDE} with factors
\begin{align*}
    h_1(x,y,\lambda, \varepsilon) &= 1-\tilde h x, \quad h_2(x,y,\lambda, \varepsilon) = 1- \tilde h x, \\
    \quad h_3(x,y,\lambda, \varepsilon) &= \frac{\tilde h}{2} (x-\lambda), \quad
     h_4(x,y,\lambda, \varepsilon) = 1-\frac{\tilde h}{2}x, \\
     h_5(x,y,\lambda, \varepsilon) &= 1, \quad h_6(x,y,\lambda, \varepsilon) = \frac{\tilde h}{2}.
\end{align*}
This implies that the constants determining Hopf bifurcations and canard extensions are $a_1 = \frac{\tilde h}{2}, a_2 = - \tilde h, a_3 = -\tilde h, a_4 = - \frac{\tilde h}{2}, a_5 = \frac{\tilde h}{2}$ and $A=0$.
Hence, upon resubstituting $\tilde h=h/\varepsilon$, we can apply \cite[Theorem 3.1]{KrupaSzmolyanJDE} to deduce that, for sufficiently small $\epsilon > 0$, there is a (potentially degenerate) Hopf bifurcation at
\begin{equation*}
    \lambda_{\text{H}}(\sqrt{\epsilon}) = - \frac{h}{2} + \mathcal O \left( h \sqrt{\epsilon} \right),
\end{equation*}
i.e.~$\tilde p$ is stable for $\lambda < \lambda_{\text{H}}(\sqrt{\epsilon})$ and loses stability through a Hopf bifurcation as $\lambda$ passes through $\lambda_{\text{H}}(\sqrt{\epsilon})$. Additionally, we have, by \cite[Theorem 3.1]{KrupaSzmolyanJDE}, the existence of a maximal canard at the same value of $\lambda$ up to first order, i.e.~at
\begin{equation*}
     \lambda_{\text{C}}(\sqrt{\epsilon}) = - \frac{h}{2}  + \mathcal O \left( h \sqrt{\epsilon}\right).
\end{equation*}
This shows the claims.
\end{proof}
Hence, one may say that the modified equation preserves the canard phenomenon upon variation of the additional parameter $\lambda$. The implications for the Euler scheme are illustrated in Figure~\ref{fig:fold_Hopf} where for $\lambda=-h/2$ the dynamics is indicative of a periodic solution, whereas for $\lambda=0$ the solution shows the same escape beahviour as in Figure~\ref{fig:fold}.


Indeed, this value coincides (up to leading orders in $h$ and $\varepsilon$) with the value at which we observe change of stability along the curve $S_{\varepsilon} = \{y =x^2 - \varepsilon/2\}$ for solutions of equation~\eqref{eq:foldmod} or~\eqref{eq:foldmod_lambda} with $\lambda=0$. In more detail, we observe that $\mathcal R \left[ \lambda_{1,2} (x^*)\right] = 0$ for $x^* = \frac{\epsilon - \sqrt{\epsilon^2 + h^2 \epsilon}}{2h}$, cf.~equation~\eqref{eq:lambda}. A Taylor expansion in $h$ shows readily that $\sqrt{1 + 2\frac{h^2}{\varepsilon}} = 1 + \frac{h^2}{\epsilon} + \mathcal{O}(h^3)$, and, hence,
\begin{align*}
    x^*(h, \varepsilon) &= \frac{\varepsilon}{2h}\left( 1- \sqrt{1 + 2 \frac{h^2}{\varepsilon}} \right)  \\
    &= \frac{\varepsilon}{2h}\left( - \frac{ h^2}{\varepsilon} + \mathcal{O}(h^3)\right) =  - \frac{h}{2} + \mathcal{O}(h^2).
\end{align*}  

\begin{figure}
	\centering
	\includegraphics[width=0.8\linewidth]{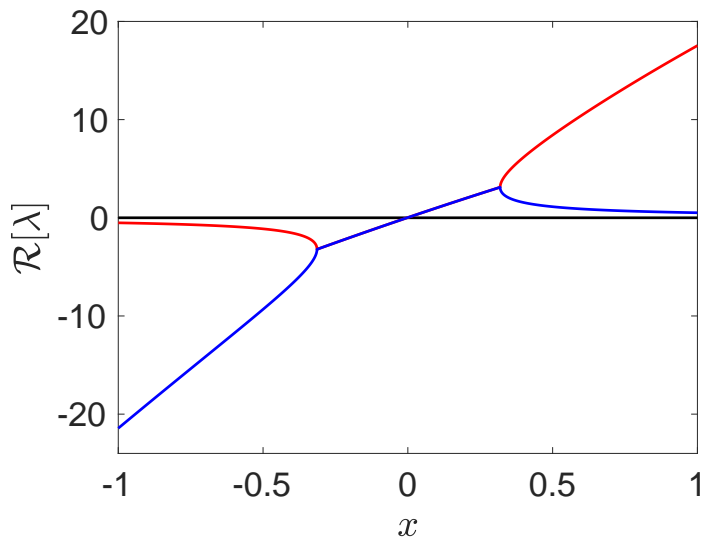}
	\caption{Real part of the two eigenvalues $\lambda_{1,2}$ (\ref{eq:lambda}) for the fold equation (\ref{eq:fold}) with $\varepsilon=0.1$ and time step $h=0.01$. In the range $x\in[-0.314,0.319]
$ the eigenvalues are a complex conjugate pair.}
	\label{fig:fold_lambda}
\end{figure}

\begin{figure}
	\centering
	\includegraphics[width=0.8\linewidth]{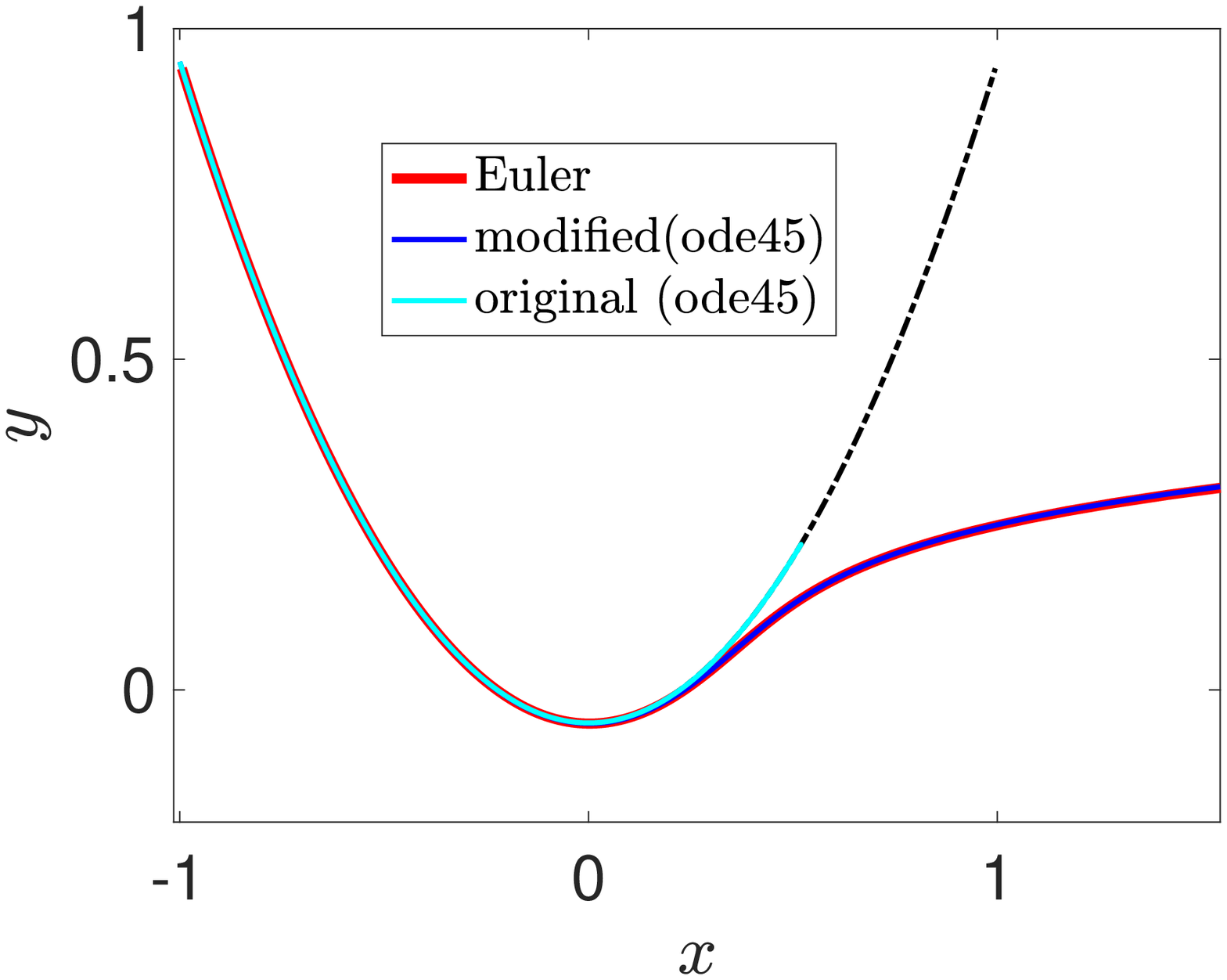}\\
	\includegraphics[width=0.8\linewidth]{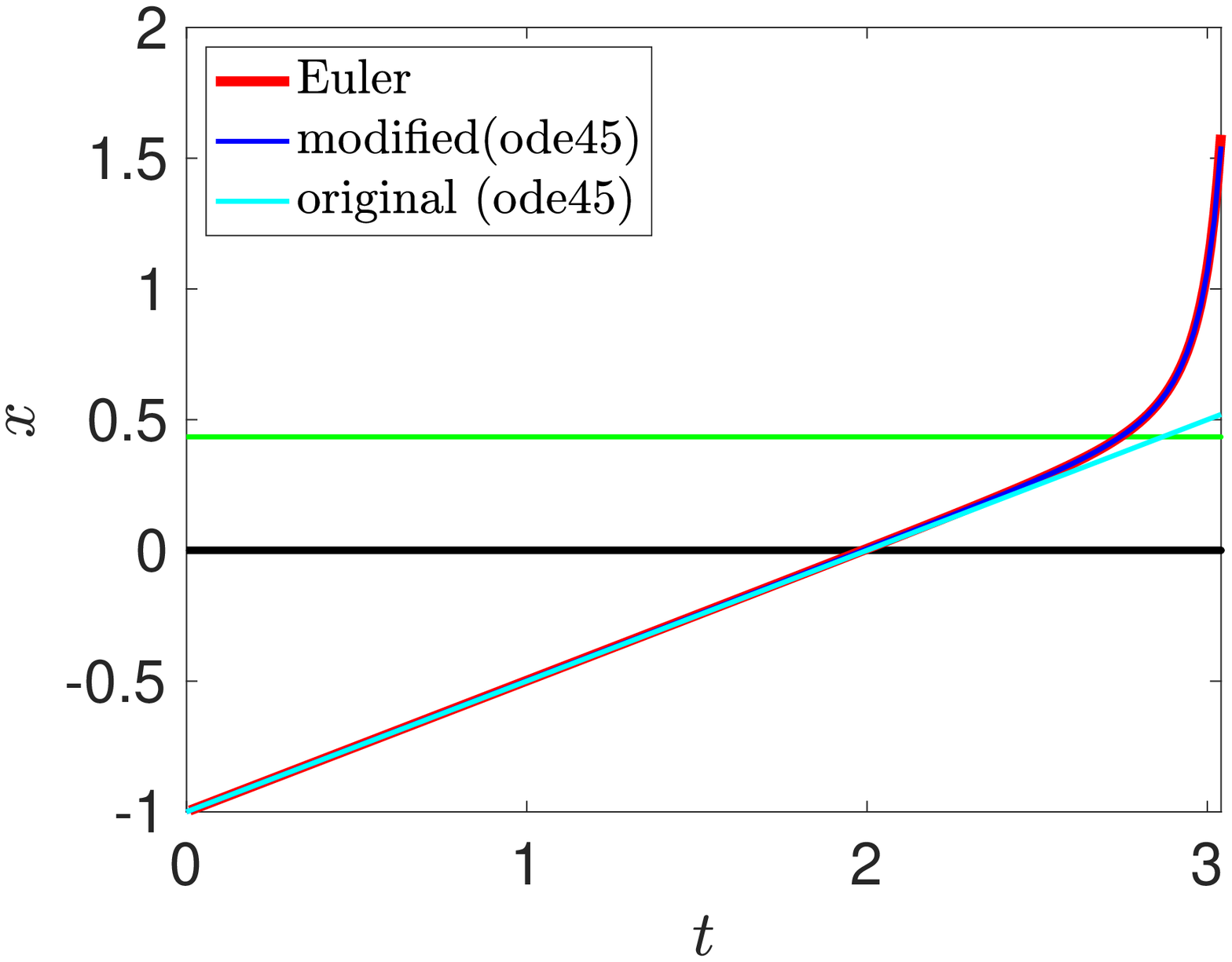}\\
	\caption{Numerical simulations for the fold equation (\ref{eq:fold}) with $\varepsilon=0.1$. We employ a time step of $h=0.01$ for the Euler discretization. Top: Dynamics of the Euler discretization (online red), a high-order simulation of the original fold equation (online cyan) and a high-order simulation of the modified equation (\ref{eq:foldmod}).  The dashed line shows the approximate manifold $y=x^2 - \varepsilon/2$. Bottom: Plot of $x$ as a function of time. The horizontal line shows the value $x = x_0 + \tau/2$ where $\tau$ is the time for which the way-in/way-out map $\Psi(\tau)=0$.  
 }
	\label{fig:fold}
\end{figure}

\begin{figure}
	\centering
	\includegraphics[width=0.8\linewidth]{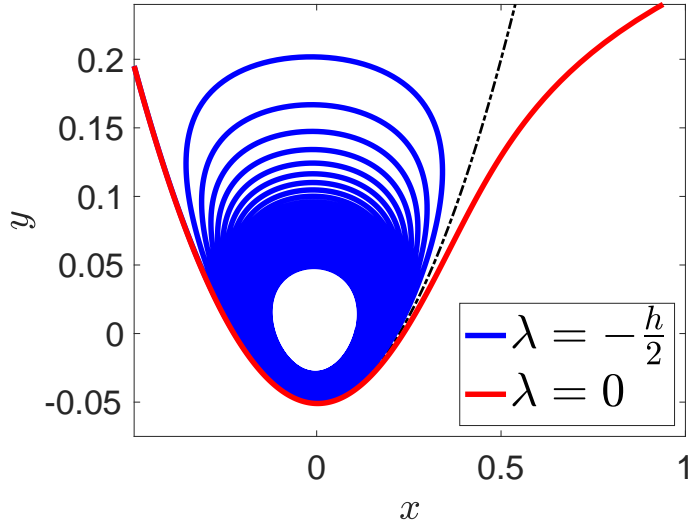}
	\caption{Numerical simulations for the fold equation (\ref{eq:fold_lambda}) with $\varepsilon=0.1$. We employ a time step of $h=0.01$ for the Euler discretization. Shown are results for $\lambda=0$ (cf. Figure~\ref{fig:fold}a (online red) and for the critical parameter $\lambda=\lambda_H = -h/2$ (online blue) which indicates relaxation onto a limit cycle. The dashed line shows the approximate manifold $S_\varepsilon$ with $y=x^2 - \varepsilon/2$.}
	\label{fig:fold_Hopf}
\end{figure}



\section{Canard in a transcritical singularity}
\label{sec:transcritical}
As a second study, we now consider the simplest canonical form of a slow-fast system with a transcritical canard singularity 
\begin{align}
\dot x &= x^2 - y^2 + \varepsilon
\nonumber
\\
\dot y &= \varepsilon,
\label{eq:trans}
\end{align}
where $\varepsilon \ll 1$ again quantifies the degree of scale separation between the slow variable $y$ and the fast variable $x$. The associated Euler discretization with time step $h$ reads
\begin{align}
\label{eq:transEul}
x_{n+1} &= x_n  + h\left( x_n^2-y_n^2 + \varepsilon \right) \nonumber \\
y_{n+1} &= y_n + h \varepsilon.
\end{align}
The associated modified equation can be readily evaluated as
\begin{align}
\dot x &= (1-hx) (x^2 - y^2 + \varepsilon) + \varepsilon h x
\nonumber
\\
\dot y &= \varepsilon.
\label{eq:transmod}
\end{align}
Note that in the two continuous time systems (\ref{eq:trans}) and (\ref{eq:transmod}) as well as in the discrete Euler system (\ref{eq:transEul}) the line $S= \{(x,y) \in \mathbb R^2 \,:\, x=y\}$ is invariant.
For equations~\eqref{eq:trans} and~\eqref{eq:transEul} we have the stable and unstable branches
 $S^{\textnormal{a}} = \{ (x,y) \in S \,:\, x < 0\}$ and $S^{\textnormal{r}} = \{ (x,y) \in S \,:\, x > 0\}$.
 For equation~\eqref{eq:transmod}, the non-zero eigenvalue of the Jacobian when linearized around the solution $y=x$ is $\lambda = 2x(1-hx)$. 
To determine when the dynamics leaves the unstable branch $S^{\textnormal{r}}$, we compute again the way-in/way-out map $\Psi(t)$ (cf. (\ref{eq:wayinout})) along the solution $y=x=x_0+\varepsilon t$ for equation~\eqref{eq:transmod}.  As for the fold equation, when $\Psi(\tau)=0$, the solution has experienced as much expansion on the unstable branch with $S^{\textnormal{r}}$ as it has experienced contraction along $S^{\textnormal{a}}$.
\begin{proposition}
\label{prop:psi_transcritical}
     The way-in/way-out map $\Psi(t)$ along the canard solution $y=x=x_0+\varepsilon t$ for the modified equation~\eqref{eq:transmod} takes the form
     \begin{align} \label{eq:Psi}
\Psi(t) = 2x_0(1- h x_0) t + \varepsilon t^2 - 2 \varepsilon h x_0 t^2 - \frac{2}{3} \varepsilon^2 h t^3,
\end{align}
yielding the following cases depending on the initial condition $x_0$:
\begin{itemize}
    \item $ - \frac{1}{2h} < x_0 < 0$: there are $t_1, t_2 > 0$ such that $\Psi(t_1) = \Psi(t_2) = 0$ and $\Psi(t) > 0$ for all $t \in (t_1, t_2)$.
    \item $x_0 = - \frac{1}{2h}$: $\Psi(t^*) = 0$ for $t^*= \frac{3}{2h\varepsilon}$ and $\Psi (t) < 0$ for all other $t>0$.
    \item $x_0 < - \frac{1}{2h}$: $\Psi (t) < 0$ for all $t>0$.
\end{itemize}
\end{proposition}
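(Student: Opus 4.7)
The plan is to directly integrate the linearized eigenvalue along the canard trajectory, then reduce the sign analysis of $\Psi(t)$ to the sign analysis of a quadratic polynomial in $t$.

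\textbf{Step 1: Derivation of the closed form.} Substituting $x(s) = x_0 + \varepsilon s$ into the nonzero eigenvalue $\lambda(x) = 2x(1-hx)$ gives an integrand that is a polynomial of degree $2$ in $s$. Expanding
\[
2(x_0+\varepsilon s)\bigl(1 - h(x_0+\varepsilon s)\bigr) = 2x_0(1-hx_0) + 2(1-2hx_0)\varepsilon s - 2h\varepsilon^2 s^2
\]
and integrating from $0$ to $t$ yields formula~\eqref{eq:Psi} termwise, with the cubic coefficient $-\tfrac{2}{3}\varepsilon^2 h$ coming from the quadratic term in $s$.

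\textbf{Step 2: Reduction to a quadratic.} I factor out $t$ and study
\[
P(t) = 2x_0(1-hx_0) + (1-2hx_0)\varepsilon t - \tfrac{2}{3}h\varepsilon^2 t^2,
\]
so that $\Psi(t) = t\, P(t)$ and the sign of $\Psi$ on $(0,\infty)$ is controlled entirely by the sign of $P$. Since $h > 0$, this is a downward-opening parabola, and for every $x_0 < 0$ one checks that $P(0) = 2x_0(1-hx_0) < 0$. Hence $P$ is strictly negative near $t=0$ and again for large $t$, and its behaviour on $(0,\infty)$ is dictated by whether its discriminant is positive, zero, or negative.

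\textbf{Step 3: Discriminant and root analysis.} A short computation shows that the discriminant of $P$ (normalized by $\varepsilon^2$) simplifies to
\[
D'(x_0) \;=\; 1 + \tfrac{4}{3}\,hx_0(1-hx_0).
\]
Setting $u = hx_0$ and solving the quadratic inequality $u^2 - u - \tfrac{3}{4} \leq 0$ gives $D'(x_0) \geq 0$ iff $-\tfrac{1}{2h} \leq x_0 \leq \tfrac{3}{2h}$, with equality precisely at $x_0 = -\tfrac{1}{2h}$. For $x_0 < 0$ the sum and product of the roots of $P$ (read off from Vieta) are both positive, so whenever $D' > 0$ the two real roots $t_1 < t_2$ both lie in $(0,\infty)$.

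\textbf{Step 4: Case-by-case conclusion.} For $-\tfrac{1}{2h} < x_0 < 0$, the two positive roots $t_1, t_2$ of $P$ give $\Psi(t_1) = \Psi(t_2) = 0$, and $P > 0$ strictly between them (downward parabola), hence $\Psi > 0$ on $(t_1,t_2)$. At the critical value $x_0 = -\tfrac{1}{2h}$, the double root of $P$ is obtained by plugging $1-2hx_0 = 2$ into $t^* = \tfrac{(1-2hx_0)\varepsilon}{\tfrac{4}{3}h\varepsilon^2}$, yielding $t^* = \tfrac{3}{2h\varepsilon}$, and $P(t) \leq 0$ everywhere with equality only at $t^*$. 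For $x_0 < -\tfrac{1}{2h}$ one has $D' < 0$, so $P$ has no real roots and $P(0)<0$ forces $P < 0$ on all of $\mathbb R$, giving $\Psi(t) < 0$ for all $t>0$. I expect the only slightly delicate step is the algebraic simplification of the discriminant in Step~3; the rest is essentially bookkeeping of signs.
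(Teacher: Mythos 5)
Your proposal is correct and follows essentially the same route as the paper: integrating $\lambda=2x(1-hx)$ along $x(s)=x_0+\varepsilon s$ to get \eqref{eq:Psi}, writing $\Psi(t)=t\,P(t)$, and analyzing the downward parabola $P$ (the paper's $f$), with your discriminant and Vieta computations simply filling in the "simple analysis of the parabola" the paper leaves implicit. All the algebra checks out (including $t^*=\tfrac{3}{2h\varepsilon}$), the only cosmetic remark being that $D'=0$ also at $x_0=\tfrac{3}{2h}$, which is irrelevant for the cases $x_0<0$ considered here.
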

\begin{proof}
Substituting the eigenvalue $\lambda = 2x(1-hx)$ into the definition of the way-in/way-put map (\ref{eq:wayinout}) and subsequent integration yields formula~\eqref{eq:Psi}.
The second claim follows by a simple analysis of the parabola 
$$ f(t) = 2x_0(1- h x_0)  + (\varepsilon - 2 \varepsilon h x_0) t - \frac{2}{3} \varepsilon^2 h t^2,$$
where $\Psi(t) = t f(t)$.
\end{proof}
The key insight of Proposition~\ref{prop:psi_transcritical} is that our results based on the continuous-time modified equation are consistent with the discrete-time analysis in~\cite{EngelJardonSIADS}.
 There it is shown that for $x_0 \downarrow - \frac{1}{2h}$ the time $\tau > 0$ such that $\Psi(\tau)=0$ becomes arbitrarily large (cf.~Figure~\ref{fig:trans_x0}). 
  And here, in equation~\eqref{eq:Psi}, one obtains an analogous behavior:  for $x_0 > - \frac{1}{2h}$, the solution will exit from $S$ just after $t=\tau$ with $\Psi(\tau)=0$. Figure~\ref{fig:trans} shows results from a numerical simulation confirming that the modified equation is able to determine the point at which the Euler dynamics leaves the unstable branch for this case.
For $x_0 = - \frac{1}{2h}$ , we have $\Psi (t) < 0$ for all $t>0$ apart from $t^*= \frac{3}{2h\varepsilon}$, where $\Psi(t^*) = 0$. Hence, at this time contraction and expansion have compensated but immediately afterwards contraction takes over again. In other words, up to linear approximation, there is no escape for any $\tau > 0$.
In particular, our findings imply that equation~\eqref{eq:transmod} exhibits canards with arbitrarily long stabilization, establishing a continuous-time example of that behavior. 

For the numerical simulations depicted in Figures~\ref{fig:trans} and~\ref{fig:trans_x0}, we use the Matlab routine ode45 with a pre-set absolute and relative tolerance of $10^{-12}$ (as before for the fold case). We further increase the floating point precision of the Euler discretization
to 100 digits. This mitigates the possibility that the observed stabilization of the canard for $x_0=-1/2h$, as seen in Figure~\ref{fig:trans_x0}, is due to numerical round of errors (cf.~also \cite{EngelJardonSIADS}).


\begin{figure}
	\centering
	\includegraphics[width=0.65\linewidth]{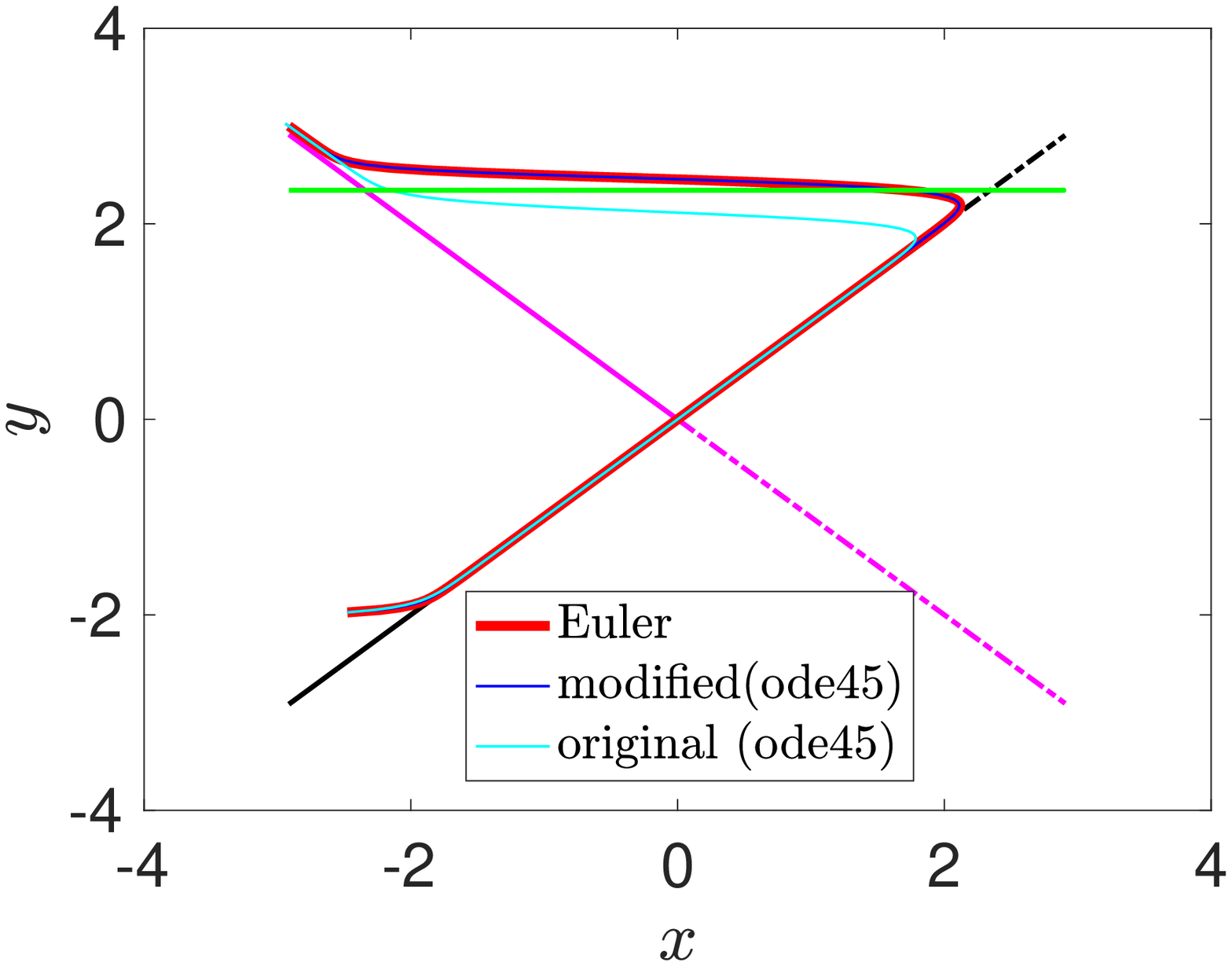}\\
	\includegraphics[width=0.65\linewidth]{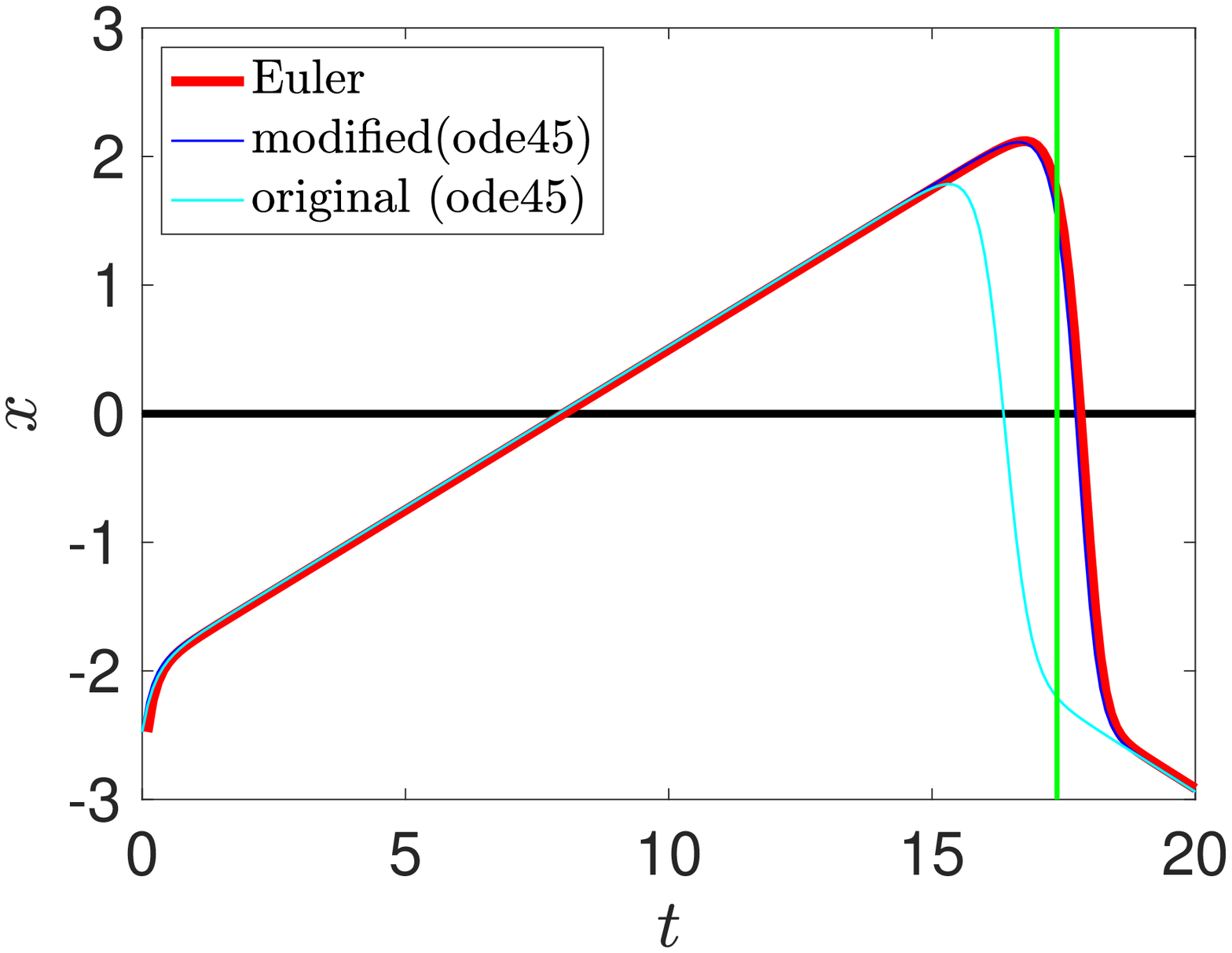}
	\caption{Numerical simulations for the transcritical equation (\ref{eq:trans}) with $\varepsilon=0.25$. We employ a time step of $h=0.1$ for the Euler discretization. Top: Dynamics of the Euler discretization (online red), a high-order simulation of the original transcritical equation (online cyan) and a high-order simulation of the modified equation (\ref{eq:transmod}). The dashed lines show the critical manifold $y=\pm x$. Bottom: Plot of $x$ as a function of time. The vertical line shows the time $t=\tau$ for which the way-in/way-out map $\Psi(\tau)=0$.}
	\label{fig:trans}
\end{figure}

\begin{figure}
	\centering
	\includegraphics[width=0.7\linewidth]{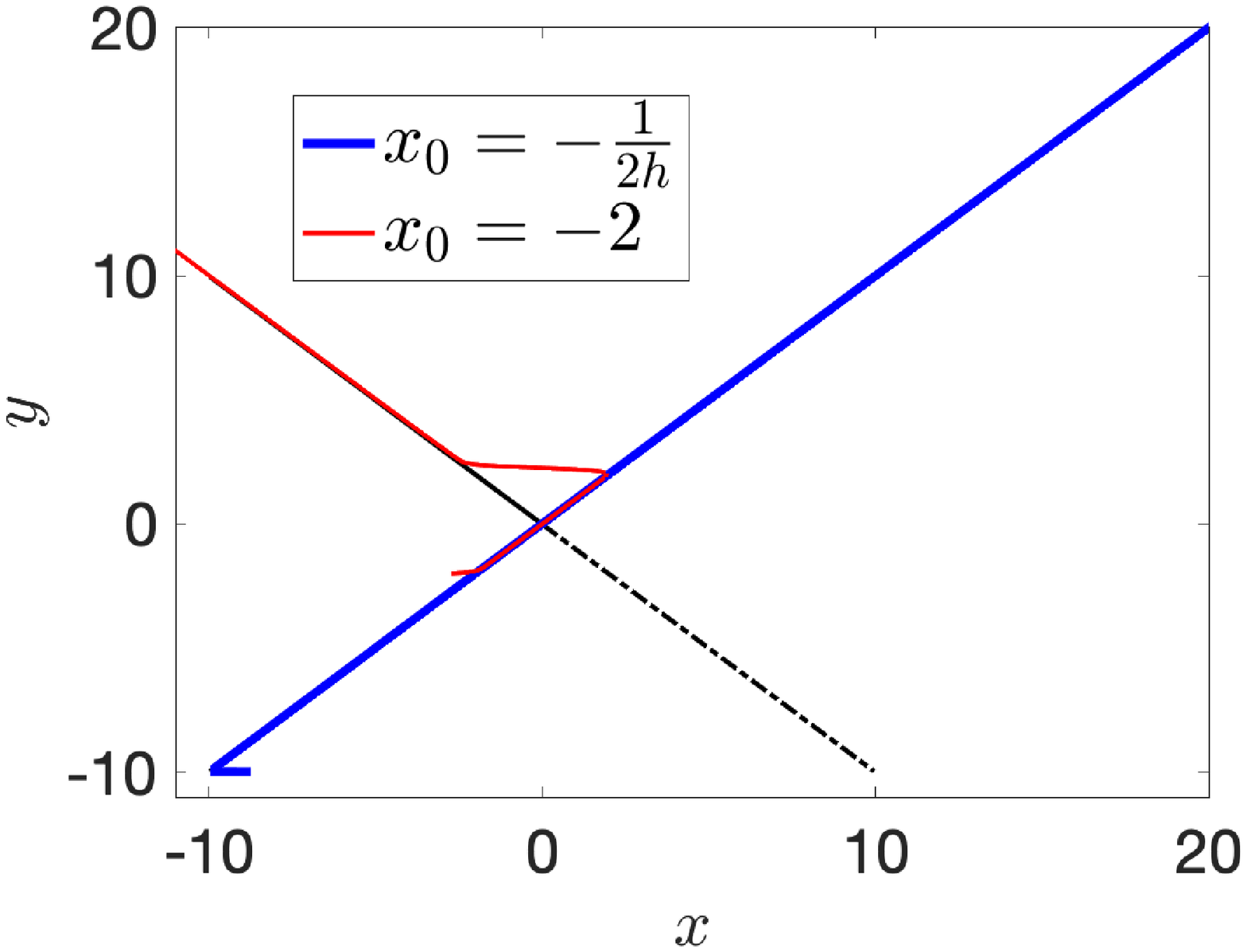}
	\caption{Numerical simulations for the transcritical equation (\ref{eq:trans}) with $\varepsilon=0.25$. We employ a time step of $h=0.1$ for the Euler discretization. We show simulations of the Euler discretization for two different initial conditions: for $x_0=-2$ (online red) which exits the manifold $y=x$ (cf. Figure~\ref{fig:trans}a) and for $x_0=-1/(2h)$ for which the modified equations predict arbitrary long stabilization (online blue).}
	\label{fig:trans_x0}
\end{figure}


\section{Discussion}
\label{sec:discuss}
The effects of a first-order scheme such as an Euler discretization can have large effects on the observed behaviour and can modify its bifurcation structure. We have shown that this can be quantitatively described by means of the modified equations which allow for a simple analytical treatment. In particular, we quantified the stabilization of canards for transcritical singularities in the continuous modified equation which closely matched those of the discrete Euler discretization of the associated dynamics. 
We suggest further investigations into similar phenomena for delayed Hopf bifurcations \cite{hayes2016geometric, neishtadt1987persistence, neishtadt1988persistence}, potentially also using modified equations for appropriate discretization schemes.

Our analysis suggests that one may modify the Euler scheme in order to obtain a better approximation of the original ODE by adding a term which would cancel the first-order correction $f_1(z)$ in the associated modified equations. For instance, the \emph{Kahan method} \cite{Kahan} 
\begin{align}
z_{n+1} &= z_n + h\left(\Id -\frac{1}{2}Df(z_n)\right)^{-1}f(z_n) \nonumber \\
&= z_n + h f(z_n) + \frac{h^2}{2}Df(z_n)f(z_n) + \mathcal{O}(h^3)
\label{eq:Kahan}
\end{align}
implies $f_1(z)=0$ (cf. Equation \ref{eq:Taylor}) and is a second order in $h$ scheme for the system $\dot z = f(z)$.
Thereby such a scheme is, of course, more accurate in terms of preserving qualitative behavior of the original ODE: it was shown in~\cite{EngelJardonSIADS, EngeletalJNLS} that the Kahan scheme (and, more generally, related A-stable methods) preserves the way-in/way-out behavior of canards and their parameter-dependent extensions.
However, for the folded canard problem, also the Kahan method does not seem to retain every property of the quadratic ODE: the first integral of motion for system~\eqref{eq:fold} is not preserved in the discretization (e.g.~\cite{EngeletalJNLS}). An extended study via a modified equation of higher order may shed some additional light on this gap.



\bibliographystyle{amsplain}

\providecommand{\bysame}{\leavevmode\hbox to3em{\hrulefill}\thinspace}
\providecommand{\MR}{\relax\ifhmode\unskip\space\fi MR }
\providecommand{\MRhref}[2]{%
  \href{http://www.ams.org/mathscinet-getitem?mr=#1}{#2}
}
\providecommand{\href}[2]{#2}

\end{document}